\documentclass[a4paper, 12pt, reqno]{amsart} 
\allowdisplaybreaks
\usepackage{amsmath,amssymb,amsthm}
\usepackage{microtype}
\usepackage[left=2.5cm, right=2.5cm, top=3cm, bottom=3cm]{geometry} 
\usepackage[colorlinks = true, urlcolor = blue, linkcolor = blue,
            citecolor = blue]{hyperref}
\usepackage{xcolor}
\usepackage[shortlabels]{enumitem}

\newtheorem{theorem}{Theorem}
\newtheorem{lemma}{Lemma}
\newtheorem{corollary}{Corollary}
\newtheorem{proposition}{Proposition}
\theoremstyle{definition}
\newtheorem{definition}{Definition}

\newtheorem{assumption}{Assumption}

\newcommand{\E}{\mathbb{E}}

\newcommand{\R}{\mathbb{R}}

\newcommand{\F}{\mathcal{F}}

\newcommand{\Pot}{\mathcal{P}}

\begin{document}
\title[Clark-Kushner condition for interacting reinforced random walks]{The Clark-Kushner condition for interacting reinforced random walks on finite graphs}

\author[F. P. A. Prado]{Fernando P. A. Prado$^{*}$}

\address[F. P. A. Prado]{Departamento de Computa\c{c}\~ao e
  Matem\'atica, Universidade de S\~ao Paulo, 
  Avenida Bandeirantes 3900, Ribeir\~ao Preto, S\~ao Paulo, 
  14040-901, Brasil}
\email{feprado@usp.br}

\author[R. A. Rosales]{Rafael A. Rosales}

\address[R. A. Rosales]{Departamento de Computa\c{c}\~ao e
  Matem\'atica, Universidade de S\~ao Paulo, 
  Avenida Bandeirantes 3900, Ribeir\~ao Preto, S\~ao Paulo, 
  14040-901, Brasil}
\email{rrosales@usp.br}

\subjclass[2020]{Primary 60K35, Secondary 60F15, 60J10}
\keywords{reinforced random walk, stochastic approximation, Clark-Kushner condition, Poisson equation, Dobrushin coefficient}
\date{\today}

\begin{abstract}
We establish the Clark-Kushner condition for a large class of interacting vertex-reinforced random walks on finite
graphs, where the transition matrix $Q^i(x)$ of each walk depends on the
joint vector $x$ of vertex occupation proportions and may have distinct
rows. This allows one to study the dynamics of the vertex occupation
measure by using the tools of stochastic approximation theory. However,
the standard approach fails because the noise inputs are in our case not
a martingale difference: they retain memory of the previous state. Using
the solution of the Poisson equation for Markov chains, we decompose the
noise into a martingale difference minus the increment of a bounded
process---a structure originating in Gordin's work on limit theorems for
stationary processes. The key technical ingredient of our approach is a
uniform geometric ergodicity bound derived from the Dobrushin contraction
coefficient, which also controls the Lipschitz continuity of the solution
of the Poisson equation. Our hypotheses require only that each $Q^i(x)$ be
irreducible, aperiodic, and Lipschitz continuous in $x$; in particular,
strictly positive entries are not assumed. Our results generalize and
simplify previous arguments considered for single self-reinforced
vertex-reinforced random walks.
\end{abstract}

\maketitle

{\renewcommand{\thefootnote}{}\footnotetext{$^{*}$Corresponding author: \texttt{feprado@usp.br}}}

%==============================================================================
\section{Introduction}\label{sec:intro}
%==============================================================================

Reinforced random walks form a class of stochastic processes in which
transition probabilities depend on the accumulated history of visits.
The defining mechanism is self-reinforcement: visiting a state modifies
the likelihood of future transitions to that state. This feedback renders
the process non-Markovian, and understanding its long-term behaviour
requires techniques beyond classical Markov chain theory.

The study of reinforced walks was initiated by Coppersmith and
Diaconis~\cite{CD87} for edge-reinforced processes, and by
Pemantle~\cite{P92} for vertex-reinforced walks. A central phenomenon
is localisation: under appropriate conditions, the walk eventually
confines itself to a strict subset of vertices. This has been established
for various graph structures and reinforcement schemes;
see~\cite{B97, PV99, V01, T04, BT11, BSS14, CT2017} and references
therein.

More recently, attention has turned to systems of \emph{multiple
interacting} reinforced walks, where the transition probabilities of
each walk depend on the joint history of all walks. Such models arise
naturally in applications where agents learn from shared experience.
The works~\cite{RPP22, PCR2023, PR2025} develop frameworks for
analysing these interacting systems via stochastic approximation.

The general framework of stochastic approximation with Markov-chain
noise controlled by the iterand was established by M\'etivier and
Priouret~\cite{MP84, MP87}. They prove almost sure convergence for
algorithms of the form
$\theta_{n+1} = \theta_n + \gamma_{n+1} f(\theta_n, Y_{n+1})$,
where $(Y_n)$ is a Markov chain whose transition kernel depends on the
current value $\theta_n$ of the iterand, under a global Lyapunov
condition for the associated mean-field equation and assuming positive
recurrence of the controlled chain for each fixed parameter value.
The present work instantiates this framework with $\theta_n = X(n)$
(the joint occupation proportion) and $Y_n = (W^1(n), \ldots, W^m(n))$
(the positions of all walks), and provides an explicit verification of
the required conditions under hypotheses that do not demand strictly
positive entries in $Q^i(x)$.

A key step in this verification is establishing the \emph{Clark-Kushner
condition}~\cite{MP84}, which controls the asymptotic behaviour of
weighted sums of the stochastic input. When the transition matrix has
all rows equal---as occurs when each walk jumps to a vertex chosen
independently of its current position---the stochastic input is a
martingale difference, and the Clark-Kushner condition follows
immediately. However, for general transition matrices with distinct
rows, the input is no longer a martingale difference: the walk
remembers where it was. This memory introduces correlations that must
be carefully controlled.

Our approach employs the \emph{Poisson equation} for Markov chains,
building on ideas originating in Gordin's work~\cite{G69} on limit
theorems for stationary processes. The key contribution of this article is the
derivation of uniform geometric ergodicity---and hence uniform control on
the solution of the Poisson equation---under the sole hypotheses that each
$Q^i(x)$ is irreducible, aperiodic, and Lipschitz continuous in $x$.
Strict positivity of entries is not required. The uniformity follows
from a compactness argument: by Lipschitz continuity, there exists a
uniform integer $k_* \geq 1$ such that $Q^i(x)^{k_*}$ has all entries
bounded below by some $\delta > 0$ for every $x \in \triangle^m$;
the Dobrushin contraction coefficient then yields a geometric bound
$\|Q^i(x)^k - \Pi^i(x)\| \leq C^* \lambda^k$ uniformly in $x$
(see Section~\ref{sec:dobrushin}).

The remainder of the article is organised as follows.
Section~\ref{sec:model} introduces the model, the standing hypotheses, and
the stochastic-approximation reformulation of the problem.
Section~\ref{sec:dobrushin} establishes uniform geometric ergodicity through
the Dobrushin contraction coefficient.
Section~\ref{sec:poisson} develops the solution of the Poisson equation and
the central Lipschitz-type estimate for $Q(x)\Pot(x)$.
Section~\ref{sec:summability} establishes the decomposition of the stochastic
input and the summability of the correction term, and
Section~\ref{sec:main} combines these ingredients into the proof of the main
theorem. Section~\ref{sec:remarks} discusses extensions to several interacting
walks and to independent geometric transition times.

%==============================================================================
\section{Model and hypotheses}\label{sec:model}
%==============================================================================

We consider $m \geq 1$ random walks on a finite connected graph
$G=(V,E)$ with $|V| = d \geq 2$. For clarity, we present the case $m = 1$; the extension to $m \geq 1$ is straightforward (Section~\ref{sec:remarks}).

\subsection{Basic setup}

Let $W(n) \in \{1, \ldots, d\}$ denote the position of the walk at time
$n \geq 1$. Set $\xi(0) = (1, 1, \ldots, 1)$, and for $n > 0$ let
$\xi(n) = e_{W(n)} \in \R^d$ be the indicator vector, where $e_v$ is the
$v$-th standard basis vector. All vectors are row vectors; matrices act
by right multiplication.

The occupation proportion is
\[
X(n) = \frac{1}{d+n}\sum_{\ell=0}^{n}\xi(\ell) \in \triangle,
\]
where $\triangle = \{x \in \R^d : x_v \geq 0, \sum_v x_v = 1\}$ is the
$d-1$ unitary simplex.

The transition probability from vertex $w$ to vertex $v$, given occupation
proportion $x = X(n)$, is $Q_{wv}(x)$. We shall make the following
assumptions about $Q(x) = (Q_{wv}(x))$, $wv \in V$.

\begin{assumption}[Irreducibility and aperiodicity]\label{ass:H1}
For each $x \in \triangle$, the matrix $Q(x)$ is irreducible and aperiodic.
\end{assumption}

\begin{assumption}[Lipschitz continuity]\label{ass:H2}
The map $x \mapsto Q(x)$ is Lipschitz continuous: there exists $L_Q > 0$ such that $\|Q(x) - Q(y)\| \leq L_Q \|x - y\|$ for all $x, y \in \triangle$.
\end{assumption}

Under Assumption~\ref{ass:H1}, each $Q(x)$ admits a unique invariant measure $\pi(x)$. Let $\Pi(x)$ denote the matrix with all rows equal to $\pi(x)$.

\subsection{Evolution equation}

The occupation proportion evolves according to
\begin{equation}\label{eqn:X_evolution}
X(n+1) - X(n) = \gamma_n\big(F(X(n)) + U(n+1)\big),
\end{equation}
where $\gamma_n = 1/(d+n+1)$, $F(x) = -x + \pi(x)$, and $U(n+1) = \xi(n+1) - \pi(X(n))$.

Let $\F_n = \sigma\big(W(0), W(1), \ldots, W(n)\big)$ denote the natural
filtration. Since $X(n)$ is $\F_n$-measurable and, given $\F_n$, the walk
moves from $W(n)$ to $W(n+1)$ according to the row $W(n)$ of $Q(X(n))$, the one-step
conditional law reads
\begin{equation}\label{eqn:one_step}
\E\big[\xi(n+1)\mid\F_n\big] = \xi(n)Q(X(n)).
\end{equation}
In particular, we are in the situation where
$\E[\xi(n+1)\mid\F_n] = \xi(n)Q(X(n)) \neq \pi(X(n))$, and so
$U(n+1)$ is \emph{not} a martingale difference: this is the central
difficulty addressed below.

To verify the Clark-Kushner condition (see~\cite{MP84}), it is sufficient to show that
\begin{equation}\label{eqn:clark_kushner}
\lim_{n \to \infty} \sup_{r \geq n} \left\| \sum_{k=n}^{r-1} \gamma_k U(k+1) \right\| = 0 \quad \text{a.s.}
\end{equation}

\subsection{Norms}

We use the $\ell^1$ norm: $\|v\| = \sum_{j=1}^{d}|v_j|$ for vectors, and the induced operator norm $\|A\| = \max_i \sum_j |A_{ij}|$ for matrices. This norm is submultiplicative and satisfies $\|Q(x)\| = 1$ for stochastic matrices.

In order to verify~\eqref{eqn:clark_kushner}, we use that $\gamma_k = O(1/k)$
and establish the decomposition
\[
U(n+1) = D(n+1) - \big(H(n+1) - H(n)\big) + C(n+1),
\]
where $D(n+1)$ is a bounded martingale difference, $H(n)$ is a bounded
sequence, and the correction term satisfies
$\|C(k+1)\| = O\!\left((\log k)^2/k\right)$; the precise statement is
equation~\eqref{eqn:decomp_full} in Section~\ref{sec:summability}. Granting
this decomposition, the sum $\sum_k \gamma_k U(k+1)$ splits into a convergent
martingale series, a telescoping term handled by summation by parts, and a
summable remainder. The intervening sections develop the definitions and
estimates required to make this precise: uniform geometric ergodicity
(Section~\ref{sec:dobrushin}), the solution of the Poisson equation and the
variation bound for $Q(x)\Pot(x)$ (Section~\ref{sec:poisson}).

%==============================================================================
\section{Uniform contraction via the Dobrushin coefficient}\label{sec:dobrushin}
%==============================================================================

\subsection{The Dobrushin coefficient}

For a stochastic matrix $Q$, the \emph{Dobrushin contraction coefficient} is
\[
\alpha(Q) = \frac{1}{2}\max_{i,j}\sum_v |Q_{iv} - Q_{jv}| = 1 - \min_{i,j}\sum_v \min(Q_{iv}, Q_{jv}).
\]
It measures the maximal total variation distance between rows of $Q$.
Key properties (see~\cite{Seneta81}, Sections~3.1--3.4):
\begin{enumerate}[(i)]
\item $0 \leq \alpha(Q) \leq 1$;
\item $\alpha(Q_1 Q_2) \leq \alpha(Q_1)\alpha(Q_2)$ (submultiplicativity);
\item For any probability vectors $\mu, \nu$:
      $\|\mu Q - \nu Q\| \leq \alpha(Q)\|\mu - \nu\|$.
\end{enumerate}

\subsection{Uniform geometric ergodicity}

\begin{proposition}\label{prop:uniform_contraction}
Under Assumptions~\ref{ass:H1}--\ref{ass:H2}, there exist constants $C^* > 0$, $\lambda \in (0,1)$, and an integer $k_* \geq 1$ such that for all $x \in \triangle$ and $k \geq 0$:
\begin{equation}\label{eqn:uniform_decay}
\|Q(x)^k - \Pi(x)\| \leq C^* \lambda^k.
\end{equation}
\end{proposition}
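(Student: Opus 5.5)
Our plan is to combine a compactness argument, which gives a uniform power $k_*$ with strictly positive entries, with the Dobrushin coefficient, which turns positivity into geometric contraction.

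\emph{Step 1: pointwise primitivity.} Fix $x\in\triangle$. By Assumption~\ref{ass:H1}, $Q(x)$ is irreducible and aperiodic, hence primitive. So there is $k(x)\ge 1$ with $Q(x)^{k(x)}$ entrywise positive. Primitivity further gives $Q(x)^{k}>0$ for every $k\ge k(x)$. One can also take the Wielandt bound $k(x)\le (d-1)^2+1=:k_*$, which is uniform in $x$.

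\emph{Step 2: a uniform lower bound on entries.} We claim there is $\delta>0$ with $\min_{w,v}(Q(x)^{k_*})_{wv}\ge\delta$ for all $x\in\triangle$. The map $x\mapsto Q(x)^{k_*}$ is continuous: $Q$ is Lipschitz by Assumption~\ref{ass:H2}, and matrix multiplication is continuous. Hence $x\mapsto \min_{w,v}(Q(x)^{k_*})_{wv}$ is a continuous, strictly positive function on the compact set $\triangle$, and it attains a positive minimum $\delta$.

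If one prefers to avoid Wielandt's bound, the alternative is a covering argument. Each $x$ has a neighbourhood on which $Q(y)^{k(x)}>0$, with entries at least half the minimum at $x$. Extract a finite subcover and take $k_*$ to be the maximum of the finitely many $k(x_i)$. This works because higher powers of an entrywise positive stochastic matrix remain positive. We expect securing this uniformity in $k$ and $\delta$ to be the main obstacle; either route settles it.

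\emph{Step 3: Dobrushin contraction.} Set $P=Q(x)^{k_*}$. By the second formula for $\alpha$, every pair of rows satisfies $\sum_v\min(P_{iv},P_{jv})\ge d\delta$, so $\alpha(P)\le 1-d\delta=:\beta<1$. Write $k=qk_*+r$ with $0\le r<k_*$. Submultiplicativity (ii) together with $\alpha\le1$ gives
\[
\alpha(Q(x)^k)\le\beta^{q}\le \beta^{-1}\bigl(\beta^{1/k_*}\bigr)^k .
\]

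\emph{Step 4: from contraction to the norm bound.} Let $\lambda=\beta^{1/k_*}$. Row $w$ of $Q(x)^k-\Pi(x)$ equals $e_wQ(x)^k-\pi(x)Q(x)^k$, using invariance of $\pi(x)$. By property (iii),
\[
\|e_wQ(x)^k-\pi(x)Q(x)^k\|\le \alpha(Q(x)^k)\,\|e_w-\pi(x)\|\le 2\,\alpha(Q(x)^k).
\]
Taking the maximum over $w$ yields $\|Q(x)^k-\Pi(x)\|\le 2\beta^{-1}\lambda^k$ for all $k\ge0$, which is \eqref{eqn:uniform_decay} with $C^*=2/\beta$. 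The case $k=0$ is covered as well, since $\|I-\Pi(x)\|\le 2\le C^*$.

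If $d\delta=1$, so that $\beta=0$, we replace $\beta$ by any number in $(0,1)$ and the same argument applies.
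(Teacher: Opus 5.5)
Your proof is correct and follows essentially the same route as the paper. You obtain a uniform power $k_*$ with entries bounded below by $\delta$, use the Dobrushin bound $\alpha(Q(x)^{k_*})\le 1-d\delta$, and split $k=qk_*+r$; the paper peels off $Q(x)^r$ with the operator norm, while you apply (ii)--(iii) to $Q(x)^k$ directly, which is equivalent. You are more careful than the paper in two places: you justify the uniform $k_*$ (via Wielandt's bound, or a finite subcover using the continuity from Assumption~\ref{ass:H2}, which the paper's ``by compactness'' leaves implicit), and you treat the degenerate case $d\delta=1$, where the paper's $C^*=2/\mu$ would be undefined.
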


\begin{proof}
By Assumption~\ref{ass:H1} and compactness of $\triangle$, there exists $k_*$ such that $Q(x)^{k_*}$ has all entries bounded below by some $\delta > 0$ uniformly in $x$. Then $\alpha(Q(x)^{k_*}) \leq 1 - d\delta =: \mu < 1$ uniformly. For $k = qk_* + r$ with $0 \leq r < k_*$:
\[
\|Q(x)^k - \Pi(x)\| \leq \|Q(x)^r\| \cdot \|Q(x)^{qk_*} - \Pi(x)\| \leq 2\mu^q \leq 2\mu^{-1}\mu^{k/k_*} = C^*\lambda^k,
\]
with $\lambda = \mu^{1/k_*}$ and $C^* = 2/\mu$.
\end{proof}

%==============================================================================
\section{The Poisson equation}\label{sec:poisson}
%==============================================================================

\subsection{The potential matrix}

For each $x \in \triangle$, the \emph{Poisson equation} associated with
$Q(x)$ is
\begin{equation}\label{eqn:poisson}
(I - Q(x))\,\Pot = I - \Pi(x).
\end{equation}

\begin{definition}
The \emph{potential matrix} $\Pot(x)$ is the solution of the Poisson
equation~\eqref{eqn:poisson}, given explicitly by
\[
\Pot(x) = \sum_{j=0}^{\infty}\big(Q(x)^j - \Pi(x)\big).
\]
\end{definition}

By Proposition~\ref{prop:uniform_contraction}, this series converges uniformly in $x$, with
\begin{equation}\label{eqn:P_bound}
\|\Pot(x)\| \leq C_\Pot := \frac{C^*}{1 - \lambda}.
\end{equation}

\begin{lemma}\label{lem:poisson_properties}
For each $x \in \triangle$:
\begin{enumerate}[(a)]
\item $Q(x)\Pot(x) = \Pot(x)Q(x)$;
\item $(I - Q(x))\Pot(x) = \Pot(x)(I - Q(x)) = I - \Pi(x)$;
\item $\Pot(x)\mathbf{1}^T = 0$.
\end{enumerate}
\end{lemma}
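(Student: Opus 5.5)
The plan is to work directly with the series $\Pot(x)=\sum_{j\ge0}(Q(x)^j-\Pi(x))$. Proposition~\ref{prop:uniform_contraction} gives absolute convergence in operator norm, so we may multiply the series term by term by any fixed matrix. We first record the elementary identities
\[
Q(x)\Pi(x)=\Pi(x)Q(x)=\Pi(x), \qquad \Pi(x)^2=\Pi(x), \qquad \Pi(x)\mathbf{1}^T=\mathbf{1}^T .
\]
The identity $\Pi(x)Q(x)=\Pi(x)$ is invariance of $\pi(x)$. The identity $Q(x)\Pi(x)=\Pi(x)$ holds because $Q(x)$ is stochastic: row $w$ of $Q(x)\Pi(x)$ equals $\sum_v Q_{wv}(x)\pi(x)=\pi(x)$. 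The last two follow from $\sum_v\pi_v(x)=1$.

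For (a), multiply the series termwise on the left and on the right by $Q(x)$. Each summand $Q(x)^j-\Pi(x)$ commutes with $Q(x)$, since $Q(x)Q(x)^j=Q(x)^{j+1}=Q(x)^jQ(x)$ and $Q(x)\Pi(x)=\Pi(x)Q(x)$. Absolute convergence then gives $Q(x)\Pot(x)=\Pot(x)Q(x)$.

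For (b), by (a) it suffices to compute $(I-Q(x))\Pot(x)$ through partial sums. The partial sum $S_N=\sum_{j=0}^{N}(Q(x)^j-\Pi(x))$ satisfies
\[
(I-Q(x))S_N=\sum_{j=0}^N\big(Q(x)^j-Q(x)^{j+1}\big)-(I-Q(x))\Pi(x)=I-Q(x)^{N+1},
\]
where we used $(I-Q(x))\Pi(x)=0$. Writing $I-Q(x)^{N+1}=(I-\Pi(x))-(Q(x)^{N+1}-\Pi(x))$, the bound~\eqref{eqn:uniform_decay} makes the last term vanish as $N\to\infty$, which yields $I-\Pi(x)$. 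The other order follows from (a), or by the same telescoping on the right.

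For (c), note that $Q(x)^j\mathbf{1}^T=\mathbf{1}^T=\Pi(x)\mathbf{1}^T$, so every summand annihilates $\mathbf{1}^T$. Convergence of the series then gives $\Pot(x)\mathbf{1}^T=0$.

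None of the steps is a real obstacle; the only point needing care is justifying the passage of limits through matrix multiplication. This follows from absolute convergence in the submultiplicative operator norm, guaranteed by~\eqref{eqn:uniform_decay} and~\eqref{eqn:P_bound}.
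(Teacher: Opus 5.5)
Your proposal is correct and follows the paper's proof essentially step for step: each summand commutes with $Q(x)$ for (a), the partial sums telescope as $(I-Q(x))S_N = I - Q(x)^{N+1} \to I-\Pi(x)$ for (b), and $Q(x)^j\mathbf{1}^T=\Pi(x)\mathbf{1}^T=\mathbf{1}^T$ gives (c), with the limits justified by the geometric bound of Proposition~\ref{prop:uniform_contraction}. The only cosmetic slip is that the subtracted term in your partial-sum computation should be $(N+1)(I-Q(x))\Pi(x)$ rather than a single copy, which is harmless since it vanishes.
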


\begin{proof}
For part (a), note that $Q(x)\Pi(x) = \Pi(x)Q(x) = \Pi(x)$. Hence
\[
Q(x)\sum_{j=0}^{n}(Q(x)^j - \Pi(x)) = \sum_{j=0}^{n}(Q(x)^{j+1} - \Pi(x)) = \sum_{j=0}^{n}(Q(x)^j - \Pi(x))Q(x).
\]
Passing to the limit as $n \to \infty$ gives $Q(x)\Pot(x) = \Pot(x)Q(x)$.

For part (b), the first equality follows immediately from (a). For the second equality:
\[
(I - Q(x))\Pot(x) = \sum_{j=0}^{\infty}(Q(x)^j - Q(x)^{j+1}) = \lim_{n\to\infty}(I - Q(x)^{n+1}) = I - \Pi(x),
\]
where the second equality follows by telescoping and the third uses $Q(x)^n \to \Pi(x)$.

Part (c) follows from $Q(x)^j\mathbf{1}^T = \Pi(x)\mathbf{1}^T = \mathbf{1}^T$.
\end{proof}

\subsection{Lipschitz continuity of the invariant measure}

\begin{lemma}\label{lem:pi_lipschitz}
Under Assumptions~\ref{ass:H1}--\ref{ass:H2}, $x \mapsto \pi(x)$ is Lipschitz with constant $L_\pi$.
\end{lemma}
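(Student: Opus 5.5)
The plan is to obtain the Lipschitz property of $\pi$ from a perturbation identity that expresses $\pi(x)-\pi(y)$ through the potential matrix $\Pot$, and then bound $\Pot$ uniformly using \eqref{eqn:P_bound}.

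Fix $x,y \in \triangle$. The stationarity relations are $\pi(x) = \pi(x)Q(x)$ and $\pi(y) = \pi(y)Q(y)$. Subtracting them gives
\[
\pi(x) - \pi(y) = \big(\pi(x)-\pi(y)\big)Q(x) + \pi(y)\big(Q(x)-Q(y)\big),
\]
and hence
\[
\big(\pi(x)-\pi(y)\big)\big(I - Q(x)\big) = \pi(y)\big(Q(x)-Q(y)\big).
\]

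Next, multiply on the right by $\Pot(x)$. Lemma~\ref{lem:poisson_properties}(b) gives $(I-Q(x))\Pot(x) = I - \Pi(x)$. The vector $v = \pi(x)-\pi(y)$ has total mass zero, so $v\Pi(x) = (v\mathbf{1}^T)\pi(x) = 0$. Therefore
\[
\pi(x)-\pi(y) = \pi(y)\big(Q(x)-Q(y)\big)\Pot(x).
\]

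Now take $\ell^1$ norms. Since $\pi(y)$ is a probability vector, for any matrix $A$ we have $\|\pi(y)A\| \le \|A\|$; this holds because the induced norm is the maximum absolute row sum, and $\pi(y)A$ is a convex combination of the rows of $A$. Combining this with submultiplicativity, \eqref{eqn:P_bound}, and Assumption~\ref{ass:H2},
\[
\|\pi(x)-\pi(y)\| \le \|Q(x)-Q(y)\|\,\|\Pot(x)\| \le L_Q C_\Pot \|x-y\|.
\]
So we may take $L_\pi = L_Q C_\Pot = L_Q C^*/(1-\lambda)$.

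The only delicate point is the uniformity of the constant. It rests entirely on $\|\Pot(x)\|\le C_\Pot$ holding for all $x$, which in turn comes from the uniform geometric bound of Proposition~\ref{prop:uniform_contraction}. The remaining checks are routine: that $\Pot(x)$ is well defined, which follows from uniform convergence of the series, and the identity $v\Pi(x)=0$ for mass-zero $v$.

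If one wished to avoid $\Pot$, there is an alternative. Iterate $\pi(x)-\pi(y) = (\pi(x)-\pi(y))Q(x)^{k} + \sum_{j<k}\pi(y)(Q(x)-Q(y))Q(x)^j$, using $(Q(x)-Q(y))\mathbf{1}^T=0$ to replace $Q(x)^j$ by $Q(x)^j-\Pi(x)$. Letting $k\to\infty$ then yields the same bound.
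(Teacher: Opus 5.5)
Your proof is correct, and it takes a different route from the paper.

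\textbf{The paper's route.} It argues by Cramer's rule. Each $\pi_v(x)$ is a ratio of determinants that are polynomials in the bounded, Lipschitz entries of $Q(x)$. The denominator is bounded away from zero by irreducibility, continuity and compactness of $\triangle$. This is independent of the ergodicity machinery: it needs only irreducibility, not aperiodicity or Proposition~\ref{prop:uniform_contraction}. However, the paper only sketches it. One has to single out the nonsingular system, e.g.\ $I-Q(x)$ with one column replaced by $\mathbf{1}^T$. The resulting $L_\pi$ is also non-explicit, since it comes from a compactness lower bound on a determinant.

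\textbf{Your route.} You use the perturbation identity $\pi(x)-\pi(y)=\pi(y)\big(Q(x)-Q(y)\big)\Pot(x)$. This reuses Lemma~\ref{lem:poisson_properties}(b) and \eqref{eqn:P_bound}, and gives the explicit constant $L_\pi = L_Q C^*/(1-\lambda)$.

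\textbf{No circularity.} Proposition~\ref{prop:uniform_contraction} and \eqref{eqn:P_bound} are proved without any regularity of $\pi$; only continuity of $Q$ and compactness enter the choice of $k_*$ and $\delta$. Your proposal relies on this implicitly, and it is worth stating.

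\textbf{What your version buys.} The constant $L_{Q\Pot}$ in Lemma~\ref{lem:QP_lipschitz} then depends only on $C^*$, $\lambda$ and $L_Q$. The whole argument also runs on a single mechanism, Dobrushin contraction plus the Poisson equation, with no separate algebraic step.
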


\begin{proof}
By Cramer's rule, $\pi_v(x)$ is a ratio of determinants involving entries of $Q(x)$. The numerator and denominator are polynomials in these entries, hence Lipschitz in $x$. The denominator is bounded away from zero by Assumption~\ref{ass:H1} and compactness of $\triangle$.
\end{proof}

\subsection{Estimates for the variation of  $Q(x)\Pot(x)$}

Since $Q(x)\Pi(x) = \Pi(x)$, we have
\begin{equation}\label{eqn:QP_series}
Q(x)\Pot(x) = \sum_{j=0}^{\infty}\big(Q(x)^{j+1} - \Pi(x)\big).
\end{equation}
Using this identity, we now bound the variation of $Q(x)\Pot(x)$ in $x$.

\begin{lemma}\label{lem:QP_lipschitz}
There exists a constant $L_{Q\Pot}$ such that for all $x, y \in \triangle$:
\[
\|Q(x)\Pot(x) - Q(y)\Pot(y)\| \leq L_{Q\Pot}\big(1 + |\log\|x-y\||\big)^2\|x - y\|.
\]
\end{lemma}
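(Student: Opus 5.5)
Using the series representation \eqref{eqn:QP_series}, we write
\[
Q(x)\Pot(x) - Q(y)\Pot(y) = \sum_{j=1}^{\infty}\Big[\big(Q(x)^{j} - \Pi(x)\big) - \big(Q(y)^{j} - \Pi(y)\big)\Big],
\]
and split the sum at a cutoff $N\geq 1$ to be chosen depending on $\|x-y\|$. The case $\|x-y\|$ bounded below (say $\|x-y\|\geq e^{-1}$) is trivial, since $\|Q(x)\Pot(x)\|\leq C_\Pot$ by \eqref{eqn:P_bound}; there we may take $L_{Q\Pot}\geq 2eC_\Pot$ (recall $\|x-y\|\leq 2$ on $\triangle$). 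So assume $h:=\|x-y\|<e^{-1}$, in which case $|\log h|\geq 1$.

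For the tail $j>N$, Proposition~\ref{prop:uniform_contraction} bounds each term by $2C^*\lambda^j$, so the tail is at most $2C^*\lambda^{N+1}/(1-\lambda)$. For the head $j\leq N$, we bound $\|Q(x)^j - Q(y)^j\|$ and $\|\Pi(x)-\Pi(y)\|$ separately. The second is $\|\pi(x)-\pi(y)\|\leq L_\pi h$ by Lemma~\ref{lem:pi_lipschitz}, since every row of $\Pi(x)-\Pi(y)$ equals $\pi(x)-\pi(y)$. For the first we use the telescoping identity
\[
Q(x)^j - Q(y)^j = \sum_{i=0}^{j-1} Q(x)^{i}\big(Q(x)-Q(y)\big)Q(y)^{j-1-i},
\]
together with submultiplicativity, $\|Q\|=1$ for stochastic matrices, and Assumption~\ref{ass:H2}. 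This gives $\|Q(x)^j-Q(y)^j\|\leq jL_Q h$. Summing over $j\leq N$, the head is at most $(L_Q N^2/2 + L_Q N/2 + L_\pi N)h$, hence $O(N^2 h)$. (The crude bound suffices. One could sharpen it to $O(h)$ per term by noting that $Q(x)-Q(y)$ has zero row sums and using the contraction of $Q(y)^{j-1-i}$ on the zero-sum subspace, but the statement only demands the quadratic logarithmic loss, which is what the crude bound produces.)

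Finally we balance the two pieces. Choosing $N=\lceil |\log h|/|\log\lambda|\rceil$ gives $\lambda^{N}\leq h$, so the tail is $O(h)$. Moreover $N\leq 1+|\log h|/|\log\lambda|\leq C(1+|\log h|)$, so the head is $O\big((1+|\log h|)^2 h\big)$. Adding the two pieces yields the claim, with $L_{Q\Pot}$ depending only on $C^*,\lambda,L_Q,L_\pi,C_\Pot$. The only delicate point is this choice of $N$ and checking that the constants are uniform in $x,y$; uniformity is inherited from Proposition~\ref{prop:uniform_contraction} and Lemma~\ref{lem:pi_lipschitz}, and we expect no genuine obstacle beyond this bookkeeping.
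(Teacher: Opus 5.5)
Your proposal is correct and follows essentially the same route as the paper: the series representation \eqref{eqn:QP_series}, a termwise telescoping bound of order $(jL_Q+L_\pi)\|x-y\|$ for the head, the uniform geometric bound $2C^*\lambda^j$ for the tail, and a cutoff $N\approx|\log\|x-y\||/|\log\lambda|$ chosen so that $\lambda^N\le\|x-y\|$. Your version is slightly more complete than the paper's: you explicitly handle the regime $\|x-y\|\ge e^{-1}$ via $\|Q(x)\Pot(x)\|\le C_\Pot$, whereas the paper only says ``assuming $\|x-y\|<1$'' and never treats the complementary case.
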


\begin{proof}
We estimate term by term:
\[
Q(x)\Pot(x) - Q(y)\Pot(y) = \sum_{j=0}^{\infty}\big[(Q(x)^{j+1} - \Pi(x)) - (Q(y)^{j+1} - \Pi(y))\big].
\]

For each $j$, we have two bounds.

\emph{Geometric bound:} By the triangle inequality and Proposition~\ref{prop:uniform_contraction}:
\begin{equation}\label{eqn:geometric_bound}
\|(Q(x)^{j+1} - \Pi(x)) - (Q(y)^{j+1} - \Pi(y))\| \leq 2C^*\lambda^{j+1}.
\end{equation}

\emph{Lipschitz bound:} By telescoping $Q(x)^{j+1} - Q(y)^{j+1} = \sum_{\ell=0}^{j} Q(x)^\ell(Q(x) - Q(y))Q(y)^{j-\ell}$ and $\|Q(x)^\ell\| \leq 1$:
\[
\|Q(x)^{j+1} - Q(y)^{j+1}\| \leq (j+1)L_Q\|x - y\|.
\]
Combined with $\|\Pi(x) - \Pi(y)\| \leq L_\pi\|x - y\|$ (Lemma~\ref{lem:pi_lipschitz}):
\begin{equation}\label{eqn:lipschitz_bound}
\|(Q(x)^{j+1} - \Pi(x)) - (Q(y)^{j+1} - \Pi(y))\| \leq \big((j+1)L_Q + L_\pi\big)\|x - y\|.
\end{equation}

We split the series at a cutoff $j_* \geq 0$ to be chosen. For $j \leq j_*$, we use~\eqref{eqn:lipschitz_bound}; for $j > j_*$, we use~\eqref{eqn:geometric_bound}.

\emph{Sum for $j \leq j_*$:} Using~\eqref{eqn:lipschitz_bound},
\begin{align}
\sum_{j=0}^{j_*} \big((j+1)L_Q + L_\pi\big)\|x-y\| 
&= \left( L_Q \sum_{j=0}^{j_*}(j+1) + (j_*+1)L_\pi \right)\|x-y\| \notag\\
&= \left( L_Q \frac{(j_*+1)(j_*+2)}{2} + (j_*+1)L_\pi \right)\|x-y\| \notag\\
&\leq K_1 (j_*+1)^2 \|x-y\|, \label{eqn:sum_lipschitz}
\end{align}
where $K_1 = L_Q + L_\pi$.

\emph{Sum for $j > j_*$:} Using~\eqref{eqn:geometric_bound},
\begin{equation}\label{eqn:sum_geometric}
\sum_{j > j_*} 2C^*\lambda^{j+1} = \frac{2C^*\lambda^{j_*+2}}{1-\lambda} = K_2 \lambda^{j_*+1},
\end{equation}
where $K_2 = 2C^*\lambda/(1-\lambda)$.

\emph{Choice of cutoff:} Set $j_* = \lfloor |\log\|x-y\||/|\log\lambda| \rfloor$, assuming $\|x-y\| < 1$. Since $j_* + 1 \geq |\log\|x-y\||/|\log\lambda|$, we have $(j_*+1)\log\lambda \leq \log\|x-y\|$ (note: $\log\lambda < 0$), hence $\lambda^{j_*+1} \leq \|x-y\|$. By~\eqref{eqn:sum_geometric}:
\[
\sum_{j > j_*} 2C^*\lambda^{j+1} \leq K_2 \|x-y\|.
\]

Also, $j_* + 1 \leq 1 + |\log\|x-y\||/|\log\lambda|$, so by~\eqref{eqn:sum_lipschitz}:
\[
\sum_{j=0}^{j_*} \big((j+1)L_Q + L_\pi\big)\|x-y\| \leq K_1 \left(1 + \frac{|\log\|x-y\||}{|\log\lambda|}\right)^2 \|x-y\|.
\]

Combining both sums, we obtain
\[
\|Q(x)\Pot(x) - Q(y)\Pot(y)\| \leq L_{Q\Pot}\big(1 + |\log\|x-y\||\big)^2\|x - y\|,
\]
where $L_{Q\Pot}$ depends only on $C^*$, $\lambda$, $L_Q$, and $L_\pi$.
\end{proof}

%==============================================================================
\section{Decomposition of the stochastic input}\label{sec:summability}
%==============================================================================

We are now in a position to establish the decomposition of $U(k+1)$ on which
the verification of the Clark-Kushner condition rests.

\begin{proposition}\label{prop:decomp}
For every $k \geq 0$,
\begin{equation}\label{eqn:decomp_full}
U(k+1) = D(k+1) - \big(H(k+1) - H(k)\big) + C(k+1),
\end{equation}
where
\[
D(k+1) = \xi(k+1)\Pot(X(k)) - \xi(k)Q(X(k))\Pot(X(k)),
\qquad
H(k) = \xi(k)Q(X(k))\Pot(X(k)),
\]
\[
C(k+1) = \xi(k+1)\big[Q(X(k+1))\Pot(X(k+1)) - Q(X(k))\Pot(X(k))\big].
\]
Moreover $D(k+1)$ is a martingale difference with respect to $\F_k$, and
$D$ and $H$ are uniformly bounded: $\|H(k)\| \leq C_\Pot$ and
$\|D(k+1)\| \leq 2C_\Pot$, with $C_\Pot$ as in~\eqref{eqn:P_bound}.
\end{proposition}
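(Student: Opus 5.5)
The plan is to verify the identity \eqref{eqn:decomp_full} by direct algebra, then obtain the martingale property from the one-step law \eqref{eqn:one_step}, and finally get the bounds from \eqref{eqn:P_bound} together with submultiplicativity of the norms.

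\emph{The identity.} Add the three pieces. The terms $\xi(k+1)Q(X(k+1))\Pot(X(k+1))$ in $-H(k+1)$ and in $C(k+1)$ cancel. The terms $\xi(k)Q(X(k))\Pot(X(k))$ in $D(k+1)$ and in $+H(k)$ also cancel. What remains is
\[
\xi(k+1)\Pot(X(k)) - \xi(k+1)Q(X(k))\Pot(X(k)) = \xi(k+1)\big(I-Q(X(k))\big)\Pot(X(k)).
\]
By Lemma~\ref{lem:poisson_properties}(b) this equals $\xi(k+1)\big(I-\Pi(X(k))\big)$. Since $\xi(k+1)=e_{W(k+1)}$ is a probability vector and every row of $\Pi(X(k))$ is $\pi(X(k))$, we have $\xi(k+1)\Pi(X(k))=\pi(X(k))$. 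The sum is therefore $\xi(k+1)-\pi(X(k))=U(k+1)$.

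\emph{Martingale difference.} The matrix $\Pot(X(k))$ is $\F_k$-measurable, and $D(k+1)$ is linear in $\xi(k+1)$. Using \eqref{eqn:one_step},
\[
\E[D(k+1)\mid\F_k]=\E[\xi(k+1)\mid\F_k]\Pot(X(k))-\xi(k)Q(X(k))\Pot(X(k))=0.
\]

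\emph{Bounds.} For a row vector $v$ and a matrix $A$, $vA=\sum_i v_iA_{i\cdot}$, so $\|vA\|\le\|v\|\,\|A\|$ with the $\ell^1$ and max-row-sum norms. For $k\ge1$ we have $\|\xi(k)\|=1$ and $\|Q\|=1$, so \eqref{eqn:P_bound} gives $\|H(k)\|\le C_\Pot$. The same argument gives $\|\xi(k+1)\Pot(X(k))\|\le C_\Pot$, and the triangle inequality then yields $\|D(k+1)\|\le 2C_\Pot$.

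The only delicate point is $k=0$, because $\xi(0)=(1,\dots,1)$ has norm $d$. There I would either note that the bound becomes $dC_\Pot$, which changes nothing downstream since only boundedness of $H$ and $D$ is used, or redefine the $k=0$ terms. There is no real obstacle: the whole proof rests on Lemma~\ref{lem:poisson_properties}(b) and the identity $\xi\Pi=\pi$.
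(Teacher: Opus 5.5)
Your proof is correct and is the paper's argument read backwards. The paper starts from $U(k+1)=\xi(k+1)\big(I-Q(X(k))\big)\Pot(X(k))$ (Lemma~\ref{lem:poisson_properties}(b)) and splits it by adding and subtracting $\xi(k)Q(X(k))\Pot(X(k))$ and $H(k+1)$, whereas you add the three pieces and collapse them to that same expression; the martingale and norm arguments are identical.

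Your $k=0$ caveat is a real point that the paper glosses over, and it affects the martingale claim as well. $\E[\xi(1)\mid\F_0]$ is a probability vector, while $\xi(0)Q(X(0))$ has total mass $d$, so the one-step law fails at $n=0$. Your second option (treat $k=0$ separately, or start at $k\geq 1$) fixes both issues at no cost, because only tails of the series enter \eqref{eqn:clark_kushner}.
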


\begin{proof}
Recall that $U(k+1) = \xi(k+1) - \pi(X(k))$. Since $\xi(k+1)$ is a probability
vector and $\Pi(X(k))$ has every row equal to $\pi(X(k))$, we have
$\xi(k+1)\Pi(X(k)) = \pi(X(k))$. Hence, by part~(b) of
Lemma~\ref{lem:poisson_properties},
\begin{equation}\label{eqn:U_poisson2}
U(k+1) = \xi(k+1)\big(I - \Pi(X(k))\big) = \xi(k+1)\big(I - Q(X(k))\big)\Pot(X(k)).
\end{equation}
Expanding the right-hand side of~\eqref{eqn:U_poisson2} and adding and
subtracting $\xi(k)Q(X(k))\Pot(X(k))$,
\begin{equation}\label{eqn:DB_split}
U(k+1) = D(k+1) - B(k+1),
\end{equation}
where
\begin{align*}
D(k+1) &= \xi(k+1)\Pot(X(k)) - \xi(k)Q(X(k))\Pot(X(k)),\\
B(k+1) &= \xi(k+1)Q(X(k))\Pot(X(k)) - \xi(k)Q(X(k))\Pot(X(k)).
\end{align*}
The term $D(k+1)$ is a martingale difference: $\Pot(X(k))$ is
$\F_k$-measurable, so by the one-step law~\eqref{eqn:one_step},
\[
\E\big[D(k+1)\mid\F_k\big]
= \xi(k)Q(X(k))\Pot(X(k)) - \xi(k)Q(X(k))\Pot(X(k)) = 0.
\]
For the term $B(k+1)$, the term being subtracted is exactly $H(k)$,
while the first term $\xi(k+1)Q(X(k))\Pot(X(k))$ differs from
$H(k+1) = \xi(k+1)Q(X(k+1))\Pot(X(k+1))$ only through the argument
$X(k+1)$ versus $X(k)$. Adding and subtracting $H(k+1)$,
\begin{align}
B(k+1)
&= \xi(k+1)Q(X(k))\Pot(X(k)) - \xi(k)Q(X(k))\Pot(X(k)) \notag\\
&= \big[H(k+1) - H(k)\big]
   - \xi(k+1)\big[Q(X(k+1))\Pot(X(k+1)) - Q(X(k))\Pot(X(k))\big] \notag\\
&= \big[H(k+1) - H(k)\big] - C(k+1).
\label{eqn:B_split}
\end{align}
Substituting~\eqref{eqn:B_split} into~\eqref{eqn:DB_split} yields the
decomposition~\eqref{eqn:decomp_full}.

Finally, using $\|\xi(k)\| = 1$, $\|Q(x)\| = 1$ and the
bound~\eqref{eqn:P_bound},
\[
\|H(k)\| \leq \|\xi(k)\|\,\|Q(X(k))\|\,\|\Pot(X(k))\| \leq C_\Pot,
\]
and likewise $\|D(k+1)\| \leq \|\xi(k+1)\Pot(X(k))\| + \|H(k)\| \leq 2C_\Pot$.
\end{proof}

The decomposition~\eqref{eqn:decomp_full} reduces the verification of the
Clark-Kushner condition to three convergence results: (i) martingale
convergence for $\sum_k \gamma_k D(k+1)$, (ii) summation by parts for
$\sum_k \gamma_k (H(k+1) - H(k))$, and (iii) summability of
$\sum_k \gamma_k C(k+1)$. The first two follow by standard arguments
(Proposition~\ref{prop:decomp} provides the boundedness they require); the
key technical step is~(iii).

\begin{lemma}\label{lem:C_summable}
$\sum_{k=0}^{\infty} \gamma_k \|C(k+1)\| < \infty$ almost surely.
\end{lemma}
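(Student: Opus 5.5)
The plan is to bound $\|C(k+1)\|$ deterministically, pathwise, by combining Lemma~\ref{lem:QP_lipschitz} with an elementary estimate on the increments of $X$. Almost sure summability then follows from a deterministic summability statement, so no probabilistic argument is needed.

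\emph{Step 1: increments of $X$.} By \eqref{eqn:X_evolution}, $X(k+1)-X(k) = \gamma_k(\xi(k+1) - X(k))$. Both $\xi(k+1)$ and $X(k)$ lie in $\triangle$, so their $\ell^1$ distance is at most $2$. Hence
\[
\|X(k+1)-X(k)\| \le 2\gamma_k = \frac{2}{d+k+1}
\]
for every $k$ and every realisation.

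\emph{Step 2: bound on $C(k+1)$.} Since $\|\xi(k+1)\|=1$ and the $\ell^1$ operator norm is submultiplicative,
\[
\|C(k+1)\| \le \|Q(X(k+1))\Pot(X(k+1)) - Q(X(k))\Pot(X(k))\|.
\]
Write $t=\|X(k+1)-X(k)\|$ and apply Lemma~\ref{lem:QP_lipschitz}, which gives $\|C(k+1)\|\le L_{Q\Pot}\,\phi(t)$ with $\phi(t) = (1+|\log t|)^2 t$. We want to replace $t$ by the upper bound $2\gamma_k$, which requires $\phi$ to be nondecreasing on the relevant range. For small $t$ we have $|\log t| = -\log t$ and
\[
\phi'(t) = (1-\log t)^2 - 2(1-\log t) = (1-\log t)(-1-\log t).
\]
This is nonnegative when $t\le e^{-1}$. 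For $k$ large enough, $2\gamma_k\le e^{-1}$, so $t\le e^{-1}$ as well and monotonicity applies. This yields
\[
\|C(k+1)\|\le L_{Q\Pot}\bigl(1+\log\tfrac{1}{2\gamma_k}\bigr)^2\, 2\gamma_k = O\bigl((\log k)^2/k\bigr).
\]
When $t=0$, the bound is trivial since $C(k+1)=0$. The finitely many small $k$ contribute a finite amount, because $\|C(k+1)\|\le 2C_\Pot$ by \eqref{eqn:P_bound}.

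\emph{Step 3: summation.} Since $\gamma_k\le 1/k$,
\[
\sum_k \gamma_k\|C(k+1)\| \lesssim \sum_k \frac{(\log k)^2}{k^2} < \infty
\]
deterministically, hence almost surely.

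The only delicate point is Step 2. The modulus of continuity in Lemma~\ref{lem:QP_lipschitz} is not monotone on the whole range, so before substituting $2\gamma_k$ for $t$ one must check that $t$ falls in the region where $\phi$ is increasing, with $k$ large enough. One must also handle the case $\|x-y\|\ge 1$ excluded in the proof of that lemma, which again only concerns finitely many $k$ and is covered by the crude bound $2C_\Pot$.
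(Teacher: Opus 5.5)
Your proof is correct and follows the paper's route. You bound $\|X(k+1)-X(k)\|\le 2\gamma_k$, feed this into Lemma~\ref{lem:QP_lipschitz} to get $\|C(k+1)\|=O((\log k)^2/k)$, and sum against $\gamma_k$ to obtain a deterministic $\sum_k(\log k)^2/k^2$ bound. Your monotonicity check of $t\mapsto(1+|\log t|)^2t$ on $(0,e^{-1}]$ makes explicit a step the paper uses silently when it replaces $\|X(k+1)-X(k)\|$ by $2/(d+k+1)$. That substitution is not literally justified for the first few $k$, where $2\gamma_k>e^{-1}$, and your crude bound $2C_\Pot$ correctly covers those terms.
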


\begin{proof}
Since $\gamma_k = 1/(d+k+1)$, we have $\|X(k+1) - X(k)\| \leq 2\gamma_k = 2/(d+k+1)$. By Lemma~\ref{lem:QP_lipschitz}:
\[
\|C(k+1)\| \leq L_{Q\Pot}(1 + |\log(2/(d+k+1))|)^2 \cdot \frac{2}{d+k+1} = O\left(\frac{(\log k)^2}{k}\right).
\]
Multiplying by $\gamma_k = 1/(d+k+1)$:
\[
\gamma_k \|C(k+1)\| = O\left(\frac{(\log k)^2}{k^2}\right).
\]
As $\sum_k (\log k)^2/k^2 < \infty$, the assertion made by the lemma follows.
\end{proof}

%==============================================================================
\section{Proof of the main theorem}\label{sec:main}
%==============================================================================

\begin{theorem}\label{thm:main}
Under Assumptions~\ref{ass:H1}--\ref{ass:H2}, the Clark-Kushner condition~\eqref{eqn:clark_kushner} holds almost surely.
\end{theorem}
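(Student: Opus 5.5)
We will use the decomposition of Proposition~\ref{prop:decomp}. For $n \le r$ it gives
\[
\sum_{k=n}^{r-1}\gamma_k U(k+1) = \sum_{k=n}^{r-1}\gamma_k D(k+1) - \sum_{k=n}^{r-1}\gamma_k\big(H(k+1)-H(k)\big) + \sum_{k=n}^{r-1}\gamma_k C(k+1).
\]
By the triangle inequality it suffices to show that each of the three tail sums tends to $0$ as $n\to\infty$, uniformly in $r\ge n$, almost surely. We treat the three terms in turn and then combine them.

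\emph{Martingale term.} Set $M_n=\sum_{k=0}^{n-1}\gamma_k D(k+1)$. By Proposition~\ref{prop:decomp}, $D(k+1)$ is an $\F_k$-martingale difference with $\|D(k+1)\|\le 2C_\Pot$, and $\gamma_k$ is deterministic. Hence $(M_n)$ is an $\R^d$-valued martingale with
\[
\sum_k \E\|\gamma_k D(k+1)\|_2^2 \le 4C_\Pot^2\sum_k\gamma_k^2<\infty,
\]
since $\sum_k (d+k+1)^{-2}<\infty$. Thus $M_n$ is bounded in $L^2$ and converges almost surely, applying the $L^2$ martingale convergence theorem coordinatewise. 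A convergent sequence is Cauchy, so $\sup_{r\ge n}\|M_r-M_n\|\to0$ almost surely.

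\emph{Telescoping term.} Summation by parts gives
\[
\sum_{k=n}^{r-1}\gamma_k(H(k+1)-H(k)) = \gamma_{r-1}H(r)-\gamma_n H(n) - \sum_{k=n+1}^{r-1}(\gamma_k-\gamma_{k-1})H(k).
\]
Since $\gamma_k$ is decreasing and $\|H(k)\|\le C_\Pot$, the norm of the right-hand side is at most
\[
C_\Pot\Big(\gamma_{r-1}+\gamma_n+\sum_{k=n+1}^{r-1}(\gamma_{k-1}-\gamma_k)\Big)\le 3C_\Pot\gamma_n .
\]
This bound is deterministic, uniform in $r$, and tends to $0$.

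\emph{Correction term.} By Lemma~\ref{lem:C_summable}, $\sum_k\gamma_k\|C(k+1)\|<\infty$ almost surely. Hence
\[
\sup_{r\ge n}\Big\|\sum_{k=n}^{r-1}\gamma_kC(k+1)\Big\|\le\sum_{k\ge n}\gamma_k\|C(k+1)\|\to0
\]
almost surely.

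\emph{Combining.} Intersecting the two almost-sure events (from the martingale term and the correction term) yields~\eqref{eqn:clark_kushner}. No step here is a real obstacle, since the analytic difficulty was absorbed into Lemma~\ref{lem:QP_lipschitz} and Lemma~\ref{lem:C_summable}. The only points to watch are the index bookkeeping in the summation by parts, namely the boundary terms $\gamma_{r-1}H(r)$ and $\gamma_nH(n)$, and the fact that the martingale argument needs only boundedness of $D$ together with $\sum_k\gamma_k^2<\infty$.
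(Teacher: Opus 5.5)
Your proof is correct and follows the paper's argument: the same decomposition from Proposition~\ref{prop:decomp}, with the martingale, telescoping, and correction terms handled respectively by martingale convergence, summation by parts, and Lemma~\ref{lem:C_summable}. You supply details the paper only sketches: the explicit deterministic $O(\gamma_n)$ bound on the telescoping term, and the Cauchy-criterion step for the $L^2$-bounded martingale, which shows that the paper's appeal to Kronecker's lemma is unnecessary.
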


\begin{proof}
The decomposition~\eqref{eqn:decomp_full}, established in
Proposition~\ref{prop:decomp}, expresses $U(k+1)$ as a martingale difference
$D(k+1)$ minus the increment of a bounded process $H(k)$ plus a correction
term $C(k+1)$. Multiplying by $\gamma_k$ and summing:
\[
\sum_{k=n}^{r-1}\gamma_k U(k+1) = \sum_{k=n}^{r-1}\gamma_k D(k+1) - \sum_{k=n}^{r-1}\gamma_k(H(k+1) - H(k)) + \sum_{k=n}^{r-1}\gamma_k C(k+1).
\]
Each term vanishes uniformly in $r \geq n$ as $n \to \infty$: the
first by the martingale convergence theorem and Kronecker's lemma
(since $D(k+1)$ is bounded and $\sum_k \gamma_k^2 < \infty$), the
second by summation by parts (since $H(k)$ is bounded and
$\gamma_k \to 0$), and the third by Lemma~\ref{lem:C_summable}.
\end{proof}

\begin{corollary}\label{cor:convergence}
Suppose Assumptions~\ref{ass:H1}--\ref{ass:H2} hold. If, in addition, there exists a Lyapunov function $L: \triangle \to [0, \infty)$ for the mean field $F(x) = -x + \pi(x)$ such that  the $L(F^{-1}(0))$ is finite, then the occupation measure $X(n)$ converges almost surely to the set of zeros of $F$.
\end{corollary}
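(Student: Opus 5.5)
The corollary will follow by feeding Theorem~\ref{thm:main} into the Métivier--Priouret / Benaïm-type convergence theorem for stochastic approximation algorithms. The recursion~\eqref{eqn:X_evolution} has the form $X(n+1)-X(n)=\gamma_n(F(X(n))+U(n+1))$. I will check each ingredient the abstract result requires, in the following order.

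\emph{Step size.} The gains satisfy $\gamma_n=1/(d+n+1)\to 0$ and $\sum_n\gamma_n=\infty$.

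\emph{Regularity of the mean field.} The mean field $F(x)=-x+\pi(x)$ is Lipschitz on $\triangle$ by Lemma~\ref{lem:pi_lipschitz}. Hence the ODE $\dot x=F(x)$ has unique solutions. It also leaves $\triangle$ invariant: on a face $x_v=0$ we have $F_v(x)=\pi_v(x)\ge 0$.

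\emph{Boundedness.} The iterates remain in the compact set $\triangle$, so no stability hypothesis beyond this is needed.

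\emph{Noise.} By Theorem~\ref{thm:main}, the noise $U$ satisfies the Clark--Kushner condition~\eqref{eqn:clark_kushner} almost surely.

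With these facts in hand, the standard argument runs as follows. Let $\bar X$ be the piecewise-linear interpolation of $(X(n))$ on the time scale $t_n=\sum_{k<n}\gamma_k$. Condition~\eqref{eqn:clark_kushner} together with the Lipschitz property of $F$ and Gronwall's lemma shows that $\bar X$ is an asymptotic pseudotrajectory of the flow $\Phi$ of $F$. That is, for every $T>0$,
\[
\sup_{0\le h\le T}\|\bar X(t+h)-\Phi_h(\bar X(t))\|\to 0 \quad\text{as } t\to\infty,
\]
almost surely. Consequently the limit set $\mathcal L$ of $(X(n))$ is a.s.\ nonempty, compact, connected, and internally chain transitive for $\Phi$.

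The Lyapunov step is the part that carries the content. I read the hypothesis in the usual sense: $L$ is continuous, $\nabla L(x)\cdot F(x)<0$ off $F^{-1}(0)$, and $L(F^{-1}(0))$ has empty interior in $\R$. Finiteness of $L(F^{-1}(0))$ gives the last property. Benaïm's proposition then says that an internally chain transitive set of a flow admitting such a strict Lyapunov function is contained in $F^{-1}(0)$, and on it $L$ is constant.

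The sketch of that proposition is as follows. Because $L(F^{-1}(0))$ is finite, the compact connected set $\mathcal L$ cannot meet two different level values without passing through values in the complement of $L(F^{-1}(0))$. Chain recurrence combined with the strict decrease of $L$ along orbits off the equilibria then forces $L|_{\mathcal L}$ to be constant. Any point of $\mathcal L$ outside $F^{-1}(0)$ would flow to a strictly lower value of $L$ while staying in the invariant set $\mathcal L$, which is a contradiction. Hence $\mathcal L\subset F^{-1}(0)$, and so $\mathrm{dist}(X(n),F^{-1}(0))\to 0$ almost surely.

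I expect the main obstacle to be purely a matter of bookkeeping: matching~\eqref{eqn:clark_kushner}, stated for fixed-index windows, with the time-window formulation $\sup_{r:\,t_r\le t_n+T}$ used in the pseudotrajectory lemma. The indexwise supremum over all $r\ge n$ dominates the windowed one, so that implication is immediate. The other adjustment is handling the possibly non-smooth $L$, for which I would work with the continuous-decrease formulation along flow lines rather than the gradient.
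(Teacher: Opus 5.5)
Your proposal is correct and takes the same route as the paper, whose proof is the single remark that the corollary follows from Theorem~\ref{thm:main} together with Bena\"{\i}m's dynamical-systems theory of stochastic approximation. You have unpacked that citation into its steps: the interpolated process is an asymptotic pseudotrajectory, its limit set is internally chain transitive, and the strict-Lyapunov-function proposition applies, with finiteness of $L(F^{-1}(0))$ supplying the empty-interior condition it requires.
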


\begin{proof}
This follows from Theorem~\ref{thm:main} and the general theory of stochastic approximation~\cite{B96, B99}.
\end{proof}

%==============================================================================
\section{Concluding remarks}\label{sec:remarks}
%==============================================================================

We have established the Clark-Kushner condition for vertex-reinforced random
walks whose transition mechanism is governed by a matrix $Q(x)$ that depends on
the occupation proportions and need not have equal rows. The only structural
requirements are that $Q(x)$ be irreducible and aperiodic for each $x$ and
Lipschitz continuous in $x$; in particular, no entry of $Q(x)$ is required to
be strictly positive. The argument rests on writing the stochastic input as a
martingale difference minus the increment of a bounded process plus a slowly
varying correction, and on controlling that correction through the solution of
the Poisson equation, whose uniform regularity is in turn a consequence of the
Dobrushin contraction applied to a suitable power $Q(x)^{k_*}$.

The analysis is not confined to a single walk. For $m \geq 1$ interacting
walks with joint occupation vector $X(n) = (X^1(n), \ldots, X^m(n)) \in
\triangle^m$, each walk carries its own transition matrix $Q^i(x)$ depending on
the joint state, and Assumptions~\ref{ass:H1}--\ref{ass:H2} are imposed on each
$Q^i$ separately. Every estimate above is uniform in $x$ and applies verbatim
to each component, so Theorem~\ref{thm:main} extends to the interacting system
without modification.

A more striking consequence concerns the timing of the transitions. In the
original model the walks act synchronously: at each discrete time every walk
performs a move. Suppose instead that walk $i$ moves only at the epochs of an
independent geometric clock of parameter $p^i \in (0,1]$, remaining at its
current vertex otherwise. Averaging over the clock, the effective one-step
transition matrix becomes
\[
Q^i(x, p^i) = p^i\,Q^i(x) + (1-p^i)\,I,
\]
which interpolates between the active dynamics $Q^i(x)$ at $p^i = 1$ and pure
laziness at $p^i \to 0$. This family preserves the standing hypotheses:
$Q^i(x,p^i)$ has the same off-diagonal support as $Q^i(x)$, hence the same
communication classes and irreducibility; its diagonal is bounded below by
$1 - p^i$, which makes it aperiodic; and it is Lipschitz in $x$ with constant
at most $p^i L_Q \leq L_Q$. Most importantly, the invariant measure is left
untouched, since $\mu Q^i(x) = \mu$ gives
\[
\mu Q^i(x, p^i) = p^i\,\mu Q^i(x) + (1-p^i)\,\mu = \mu.
\]

Because the invariant measures, and therefore the mean field
$F(x) = -x + \pi(x)$ and its equilibria, do not depend on the parameters
$p^i$, the set of almost-sure accumulation points of the occupation process
$X(n)$ is the same for every choice of strictly positive geometric rates as it
is in the synchronous model. Introducing asynchrony or laziness into the walks
alters only the speed at which the process settles, not the configurations to
which it can converge. This invariance parallels the phenomenon observed for
interacting reinforced walks with independent geometric transition
times~\cite{PR2025, RPP22}, and it is perhaps the most economical illustration
of the robustness of the limiting behaviour under changes in the transition
schedule.

What the geometric clocks do change is the law of the process before it
stabilises, and in particular the manner in which mass is distributed among the
accumulation points. Characterising the resulting limiting distribution of the
occupation proportions---how the probabilities of the several stable equilibria
depend on the rates $p^1, \ldots, p^m$---lies beyond the reach of the
present techniques and remains an interesting open problem, likely more
tractable in the context of specific models through the analysis of
particular choices of~$Q^i(x)$.

%==============================================================================
% References
%==============================================================================

\end{document}